\newif\ifpdf
\numberwithin{equation}{section} \swapnumbers
\newtheorem{satz}{Satz}[section]
\newtheorem{theorem}[satz]{Theorem}
\newtheorem{proposition}[satz]{Proposition}
\newtheorem{remark}[satz]{Remark}
\begin{document}

\title[Isomorphisms for spaces of predictable processes]{Isomorphisms for spaces of predictable processes and an extension of the It\^{o} integral}
\author{Barbara R\"udiger \and Stefan Tappe}
\thanks{We are grateful to Sergio Albeverio, Damir Filipovi\'{c}, Michael Kupper and Vidyadhar Mandrekar for their helpful remarks and discussions.}
\address{Bergische Universit\"{a}t Wuppertal, Fachbereich C -- Mathematik und Informatik, Gau{\ss}stra{\ss}e 20, D-42097 Wuppertal, Germany}
\email{ruediger@uni-wuppertal.de}
\address{Leibniz Universit\"{a}t Hannover, Institut f\"{u}r Mathematische Stochastik, Welfengarten 1, 30167 Hannover, Germany}
\email{tappe@stochastik.uni-hannover.de}
\begin{abstract}
Our goal of this note is to give an easy proof that spaces of predictable processes with values in a Banach space are isomorphic to spaces of progressive resp. adapted, measurable processes. This provides a straightforward extension of the It\^{o} integral in infinite dimensions. We also outline an application to stochastic partial differential equations.
\end{abstract}
\keywords{stochastic processes in infinite dimensions, isomorphisms for spaces of predictable processes, It\^{o} integral, stochastic partial differential equations}
\subjclass[2010]{60H15, 60G17}

\maketitle\thispagestyle{empty}

\section{Introduction}

The It\^{o} integral for predictable processes is well-established in the literature, see, e.g., \cite{Jacod-Shiryaev, Protter}. 
For non-predictable integrands, there is also an integration theory if the driving process is a continuous semimartingale, see, e.g., \cite{Karatzas-Shreve, Revuz, Atma-book}, and some references, such as \cite{Skorokhod, Liptser-Shiryaev, Barbara-Integration}, also consider the situation where the driving noise has jumps. 

Another approach to stochastic integration has been presented in \cite{Onno-Wiener, Onno-Levy}. Here the connection to the usual It\^{o} integral was established in \cite{Tappe}, see also Appendix B in \cite{Filipovic-Tappe}.

The goal of the note is to give an easy proof that spaces of predictable processes with values in a Banach space are isomorphic to spaces of progressive resp. adapted, measurable processes, which provides a straightforward extension of the It\^{o} integral for Banach space valued processes. We also compute the inverse of the embedding operator of these spaces in particular situations.

The remainder of this text is organized as follows: In Section \ref{sec-proj} we prove the announced result and compute the inverse of the embedding operator. In Section \ref{sec-integral} we consider the It\^{o} integral in various situations and sketch an application to stochastic partial differential equations.

\section{Isomorphisms for spaces of predictable processes}\label{sec-proj}

Let $(\tilde{\Omega},\tilde{\mathbb{P}},\tilde{\mathcal{F}})$ be a measure space. In view of our applications in Section \ref{sub-Prm}, we do not demand that $(\tilde{\Omega},\tilde{\mathbb{P}},\tilde{\mathcal{F}})$ is a probability space. Moreover, let $(\tilde{\mathcal{F}}_t)_{t \geq 0}$ be a filtration satisfying the usual conditions.

Fix $T > 0$ and let $\mu$ be a measure on $(\tilde{\Omega} \times [0,T], \tilde{\mathcal{F}}_T \otimes \mathcal{B}([0,T]))$ with marginales
\begin{align*}
\mu(A \times [0,T]) = \tilde{\mathbb{P}}(A), \quad A \in \tilde{\mathcal{F}}_T.
\end{align*}
We assume that there exists a sequence $(A_n)_{n \in \mathbb{N}} \subset \tilde{\mathcal{F}}_0$ such that $A_n \uparrow \tilde{\Omega}$ and $\tilde{\mathbb{P}}(A_n) < \infty$ for all $n \in \mathbb{N}$. In particular, the measures $\tilde{\mathbb{P}}$ and $\mu$ are $\sigma$-finite.

There exists a transition kernel $K : \tilde{\Omega} \times \mathcal{B}([0,T]) \rightarrow \mathbb{R}_+$ from $(\tilde{\Omega},\tilde{\mathcal{F}}_T)$ to $([0,T],\mathcal{B}([0,T]))$ such that
\begin{align*}
\mu(B) = \int_{\tilde{\Omega}} \int_0^T \mathbbm{1}_B(\tilde{\omega},t) K(\tilde{\omega},dt) \tilde{\mathbb{P}}(d \tilde{\omega}), \quad B \in \tilde{\mathcal{F}}_T \otimes \mathcal{B}([0,T]),
\end{align*}
see \cite[Sec. II.1a]{Jacod-Shiryaev}. 
We denote by $\tilde{\mathcal{P}}_T$ the predictable $\sigma$-algebra on $\tilde{\Omega} \times [0,T]$. Let $F$ be a separable Banach space. Fix an arbitrary $p \geq 1$ and define the spaces
\begin{align*}
L_{T,{\rm pred}}^p(F) &:= L^p(\tilde{\Omega} \times [0,T], \tilde{\mathcal{P}}_T, \mu; F),
\\ L_{T,{\rm prog}}^p(F) &:= L^p(\tilde{\Omega} \times [0,T], \tilde{\mathcal{F}}_T \otimes \mathcal{B}([0,T]), \mu; F) \cap {\rm Prog}_T(F),
\\ L_{T,{\rm ad}}^p(F) &:= L^p(\tilde{\Omega} \times [0,T], \tilde{\mathcal{F}}_T \otimes \mathcal{B}([0,T]), \mu; F) \cap {\rm Ad}_T(F),
\end{align*}
where ${\rm Prog}_T(F)$ denotes the linear space of all $F$-valued progressively measurable processes $(\Phi_t)_{t \in [0,T]}$ and ${\rm Ad}_T(F)$ denotes the linear space of all $F$-valued adapted processes $(\Phi_t)_{t \in [0,T]}$. Then we have the inclusions
\begin{align*}
L_{T,{\rm pred}}^p(F) \subset L_{T,{\rm prog}}^p(F) \subset L_{T,{\rm ad}}^p(F).
\end{align*}
In the upcoming theorem, we will show that these three spaces actually are isometrically isomorphic, provided the measures $A \mapsto K(\omega,A)$ are absolutely continuous. In particular, the latter two spaces are Banach spaces, too.

\begin{theorem}\label{thm-cong}
Suppose there is a nonnegative, measurable function $f : \tilde{\Omega} \times [0,T] \rightarrow \mathbb{R}$ such that for each $\tilde{\omega} \in \tilde{\Omega}$ we have $K(\tilde{\omega},dt) = f(\tilde{\omega},t)dt$. Then we have
\begin{align*}
L_{T,{\rm pred}}^p(F) \cong L_{T,{\rm prog}}^p(F) \cong L_{T,{\rm ad}}^p(F).
\end{align*}
\end{theorem}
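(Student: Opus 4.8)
The plan is to recast the two inclusions as a single surjectivity statement and to isolate the one measure-theoretic fact that does all the work. Since all three spaces carry the same $L^p(\mu)$-norm and the embeddings are plain inclusions of classes of processes, the embeddings are automatically isometric; hence it suffices to prove that every $\Phi \in L_{T,{\rm ad}}^p(F)$ agrees $\mu$-a.e.\ with a predictable process. The completeness of the two larger spaces and the isometry of the resulting bijections then follow for free. The pivotal observation is that the hypothesis $K(\tilde\omega,dt) = f(\tilde\omega,t)\,dt$ means exactly that $d\mu = f\, d(\tilde{\mathbb{P}} \otimes \lambda)$, where $\lambda$ denotes Lebesgue measure on $[0,T]$; in particular $\mu \ll \tilde{\mathbb{P}} \otimes \lambda$, so that every $(\tilde{\mathbb{P}} \otimes \lambda)$-null set is $\mu$-null. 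Both steps below reduce to producing such a null set.

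\emph{From adapted to progressive.} Because $(\tilde{\mathcal{F}}_t)_{t \ge 0}$ satisfies the usual conditions (and, using the exhausting sequence $(A_n)$, one may localize to the probabilistic setting), I would invoke the classical fact that a jointly measurable, adapted process admits a progressively measurable modification $\tilde\Phi$; that is, $\tilde\Phi_t = \Phi_t$ $\tilde{\mathbb{P}}$-a.s.\ for each fixed $t$. Then each $t$-section of $\{\Phi \ne \tilde\Phi\}$ is $\tilde{\mathbb{P}}$-null, so by Tonelli \[ (\tilde{\mathbb{P}} \otimes \lambda)(\{\Phi \ne \tilde\Phi\}) = \int_0^T \tilde{\mathbb{P}}(\{\Phi_t \ne \tilde\Phi_t\})\, dt = 0, \] whence $\mu(\{\Phi \ne \tilde\Phi\}) = 0$. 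Thus we may assume from now on that $\Phi$ is progressively measurable.

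\emph{From progressive to predictable.} This is the heart of the matter. After truncating in the range, replacing $\Phi$ by $\Phi\,\mathbbm{1}_{\{\|\Phi\| \le M\}}$ (again progressive) and letting $M \to \infty$ at the end, I may assume $\Phi$ is bounded, so that $s \mapsto \Phi_s(\tilde\omega)$ is Bochner integrable on $[0,T]$ for every $\tilde\omega$. Define the left averages \[ \Phi_t^{(n)} := n \int_{(t - 1/n)\vee 0}^{t} \Phi_s\, ds, \qquad t \in [0,T], \] as $F$-valued Bochner integrals. Each $\Phi^{(n)}$ is continuous in $t$, and progressive measurability makes it adapted; being continuous and adapted, it is predictable. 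By the vector-valued Lebesgue differentiation theorem, valid since $F$ is separable, for every $\tilde\omega$ one has $\Phi_t^{(n)}(\tilde\omega) \to \Phi_t(\tilde\omega)$ for $\lambda$-a.e.\ $t$. Hence the exceptional set $N := \{(\tilde\omega,t) : \Phi_t^{(n)}(\tilde\omega) \not\to \Phi_t(\tilde\omega)\}$ has $\lambda$-null $\tilde\omega$-sections, so $(\tilde{\mathbb{P}} \otimes \lambda)(N) = 0$ and therefore $\mu(N) = 0$. Setting $\Psi := \lim_n \Phi^{(n)}$ on the predictable set where this limit exists in $F$, and $\Psi := 0$ elsewhere, produces a predictable process with $\Psi = \Phi$ $\mu$-a.e. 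Combining the two steps gives a predictable representative of the original $\Phi$, which completes the argument.

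I expect the main obstacle to be twofold and to live entirely in the second step: constructing a predictable approximation of a progressive process, and then upgrading the convergence from $\lambda$-a.e.\ in time to $\mu$-a.e. It is precisely here that absolute continuity is indispensable. Without it, a progressive (indeed optional) process may disagree with every predictable process on the graph of a totally inaccessible stopping time --- a set of zero $(\tilde{\mathbb{P}} \otimes \lambda)$-measure on which a general $\mu$ can nonetheless concentrate --- and the conclusion would fail. The residual technicalities, namely Bochner measurability and the differentiation theorem (covered by separability of $F$), integrability of the averages (covered by truncation), and predictability of the limit $\Psi$ (a countable Cauchy criterion on the predictable processes $\Phi^{(n)}$), are routine.
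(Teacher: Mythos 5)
Your argument is correct, but it takes a genuinely different route from the paper's. The paper performs the same reduction (every $\Phi \in L_{T,{\rm ad}}^p(F)$ has a predictable $\mu$-version, after the same truncation to bounded $\Phi$ and finite measure), but its predictable approximants are the Liptser--Shiryaev time discretizations $\Phi^n_t = \Phi_{s+\theta_n(t-s)}$, built from samples of $\Phi$ at shifted dyadic times: strong continuity of the shift semigroup on $L^p(\mathbb{R};F)$ plus Fubini produce a shift $s$ for which $\Phi^n \to \Phi$ $\tilde{\mathbb{P}}\otimes\lambda$--a.e., and only at the very end does the density $f$ enter, through dominated convergence, to upgrade this to convergence in $L^p(\mu)$ and conclude via completeness of $L_{T,{\rm pred}}^p(F)$. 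You instead (i) isolate the hypothesis as the null-set transfer $\mu \ll \tilde{\mathbb{P}}\otimes\lambda$, (ii) pass from adapted to progressive via the Chung--Doob modification theorem, and (iii) pass from progressive to predictable via left Bochner averages and the vector-valued Lebesgue differentiation theorem. Both routes are sound. Yours is conceptually cleaner: the role of absolute continuity is completely transparent, and the averaging argument replaces the paper's shift-semigroup/Fubini/subsequence selection. What the paper's route buys is self-containedness: since its approximants sample $\Phi$ only at countably many deterministic times, adaptedness alone already makes them predictable, so no progressive modification theorem is needed. This matters slightly for you, because the textbook statements of Chung--Doob are for real-valued processes; for separable $F$-valued processes the extension is routine (e.g.\ via a countable separating family of functionals and a Borel-isomorphism argument) but should be recorded, as should the gluing of modifications over the $\tilde{\mathcal{F}}_0$-sets $A_n$ in the $\sigma$-finite setting, which you correctly flag. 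Finally, your untruncation deserves one explicit line: the predictable versions $\Psi_M$ of $\Phi\,\mathbbm{1}_{\{\|\Phi\|\leq M\}}$ converge pointwise to $\Phi$ off the countable union of the exceptional $\mu$-null sets, so one takes $\Psi = \lim_M \Psi_M$ on the predictable set where this limit exists and $0$ elsewhere; this is routine, exactly as in your step for $\lim_n \Phi^{(n)}$.
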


\begin{proof}
It suffices to prove that for each $\Phi \in L_{T,{\rm ad}}^p(F)$ there exists a process $\pi(\Phi) \in L_{T,{\rm pred}}^p(F)$ such that $\Phi = \pi(\Phi)$ almost everywhere with respect to $\mu$.

Let $\Phi \in L_{T,{\rm ad}}^p(F)$ be arbitrary.
We will show that there is a sequence $(\Phi^n)_{n \in \mathbb{N}} \subset L_{T,{\rm pred}}^p(F)$ such that $\Phi^n \rightarrow \Phi$ in $L_{T,{\rm ad}}^p(F)$. Then, $(\Phi^n)_{n \in \mathbb{N}}$ is a Cauchy sequence in $L_{T,{\rm pred}}^p(F)$ and thus has a limit $\pi(\Phi) \in L_{T,{\rm pred}}^p(F)$. But this limit has the property $\Phi = \pi(\Phi)$ almost everywhere with respect to $\mu$, which will finish the proof.

The proof of the existence of a sequence $(\Phi^n)_{n \in \mathbb{N}} \subset L_{T,{\rm pred}}^p(F)$ satisfying $\Phi^n \rightarrow \Phi$ in $L_{T,{\rm ad}}^p(F)$ is divided into two steps:

\begin{enumerate}
\item First of all, we may assume that 
\begin{align}\label{measure-bounded}
\tilde{\mathbb{P}}(\tilde{\Omega}) = \mu(\tilde{\Omega} \times [0,T]) < \infty 
\end{align}
and that there is a constant $M > 0$ such that
\begin{align}\label{Phi-bounded}
\| \Phi \| \leq M \quad \text{everywhere.}
\end{align}
Indeed, by assumption, there exists a sequence $(A_n)_{n \in \mathbb{N}} \subset \tilde{\mathcal{F}}_0$ with $A_n \uparrow \tilde{\Omega}$ and $\tilde{\mathbb{P}}(A_n) < \infty$ for all $n \in \mathbb{N}$. Defining the sequence $(\Phi^n)_{n \in \mathbb{N}} \subset L_{T,{\rm ad}}^p(F)$ by $\Phi^n := (\Phi \wedge n) \mathbbm{1}_{A_n}$, Lebesgue's dominated convergence theorem yields that $\Phi^n \rightarrow \Phi$ in $L_{T,{\rm ad}}^p(F)$.

\item Now, we proceed with a similarly technique as in \cite[pp. 97--99]{Liptser-Shiryaev}. We extend $\Phi$ to a process $(\Phi_t)_{t \in \mathbb{R}}$ by setting
\begin{align*}
\Phi_t(\tilde{\omega}) := 0 \quad \text{for $(\tilde{\omega},t) \in \tilde{\Omega} \times \mathbb{R} \setminus [0,T]$.}
\end{align*}
Defining for $n \in \mathbb{N}$ the function $\theta_n : \mathbb{R} \rightarrow \mathbb{R}$ by
\begin{align*}
\theta_n(t) := \sum_{j \in \mathbb{Z}} \frac{j-1}{2^n} \mathbbm{1}_{(\frac{j-1}{2^n},\frac{j}{2^n}]}(t),
\end{align*} 
we have $\theta_n(t) \uparrow t$ for all $t \in \mathbb{R}$.
The shift semigroup $(S_t)_{t \geq 0}$, $S_t f = f(t + \cdot)$ is strongly continuous on $L^p(\mathbb{R};F)$. Thus,
performing integration by the substitution $t \rightsquigarrow t+s$, using Fubini's theorem, Lebesgue's dominated convergence theorem and noting (\ref{measure-bounded}), (\ref{Phi-bounded}) we obtain
\begin{align*}
&\int_{\tilde{\Omega}} \int_0^T \int_{0}^{T} \| \Phi_{s + \theta_n(t-s)}(\tilde{\omega}) - \Phi_t(\tilde{\omega}) \|^p dt ds \tilde{\mathbb{P}}(d \tilde{\omega}) 
\\ &= \int_{\tilde{\Omega}} \int_0^T \int_{-s}^{T-s} \| \Phi_{s + \theta_n(t)}(\tilde{\omega}) - \Phi_{s+t}(\tilde{\omega}) \|^p dt ds \tilde{\mathbb{P}}(d \tilde{\omega})
\\ &= \int_{\tilde{\Omega}} \int_0^T \int_{0}^{T-t} \| \Phi_{s + \theta_n(t)}(\tilde{\omega}) - \Phi_{s+t}(\tilde{\omega}) \|^p ds dt \tilde{\mathbb{P}}(d \tilde{\omega})
\\ &\quad + \int_{\tilde{\Omega}} \int_{-T}^0 \int_{-t}^{T} \| \Phi_{s + \theta_n(t)}(\tilde{\omega}) - \Phi_{s+t}(\tilde{\omega}) \|^p ds dt \tilde{\mathbb{P}}(d \tilde{\omega}) \rightarrow 0.
\end{align*}
After passing to a subsequence, if necessary, for $\tilde{\mathbb{P}} \otimes \lambda \otimes \lambda$--almost all $(\tilde{\omega},s,t) \in \tilde{\Omega} \times [0,T] \times [0,T]$ we have
\begin{align*}
\| \Phi_{s + \theta_n(t-s)}(\tilde{\omega}) - \Phi_{t}(\tilde{\omega}) \|^p \rightarrow 0,
\end{align*}
where $\lambda$ denotes the Lebesgue measure.
Thus, there exists $s \in [0,T]$ such that
\begin{align}\label{Lip-Shi}
\| \Phi_{s + \theta_n(t-s)}(\tilde{\omega}) - \Phi_{t}(\tilde{\omega}) \|^p \rightarrow 0 \quad \text{for $\tilde{\mathbb{P}} \otimes \lambda$--almost all $(\tilde{\omega},t) \in \tilde{\Omega} \times [0,T]$.}
\end{align}
For $n \in \mathbb{N}$ we define the process $\Phi^n = (\Phi_t^n)_{t \in [0,T]}$ by
\begin{align*}
\Phi_t^n := \Phi_{s + \theta_n(t-s)} = \sum_{j \in \mathbb{Z}} \Phi_{s + \frac{j-1}{2^n}} \mathbbm{1}_{(s + \frac{j-1}{2^n},s + \frac{j}{2^n}]}(t), \quad t \in [0,T].
\end{align*}
Note that $\Phi^n$ is predictable, because $\Phi$ is adapted.
Hence, we have $(\Phi^n)_{n \in \mathbb{N}} \subset L_{T,{\rm pred}}^p(F)$. By assumption, there is a nonnegative, measurable function $f : \tilde{\Omega} \times [0,T] \rightarrow \mathbb{R}$ such that for each $\tilde{\omega} \in \tilde{\Omega}$ we have $K(\tilde{\omega},dt) = f(\tilde{\omega},t)dt$. Using (\ref{measure-bounded}) we have
\begin{align*}
\int_{\tilde{\Omega}} \int_0^T f(\tilde{\omega},t) dt \tilde{\mathbb{P}}(d \tilde{\omega}) 
= \int_{\tilde{\Omega}} \int_0^T K(\tilde{\omega},dt) \tilde{\mathbb{P}}(d \tilde{\omega}) = \mu(\tilde{\Omega} \times [0,T]) < \infty.
\end{align*}
Noting (\ref{measure-bounded}), (\ref{Phi-bounded}), we obtain by (\ref{Lip-Shi}) and Lebesgue's dominated convergence theorem
\begin{align*}
&\iint_{\tilde{\Omega} \times [0,T]} \| \Phi^n - \Phi \|^p d \mu 
= \int_{\tilde{\Omega}} \int_0^T \| \Phi_{s + \theta_n(t-s)}(\tilde{\omega}) - \Phi_t(\tilde{\omega}) \|^p K(\tilde{\omega},dt) \tilde{\mathbb{P}}(d \tilde{\omega}) 
\\ &= \int_{\tilde{\Omega}} \int_0^T \| \Phi_{s + \theta_n(t-s)}(\tilde{\omega}) - \Phi_t(\tilde{\omega}) \|^p f(\tilde{\omega},t) dt \tilde{\mathbb{P}}(d \tilde{\omega}) \rightarrow 0,
\end{align*}
showing that $\Phi^n \rightarrow \Phi$ in $L_{T,{\rm ad}}^p(F)$.

\end{enumerate}
\end{proof}

\begin{remark}
Let $\pi : L_{T,{\rm ad}}^p(F) \rightarrow L_{T,{\rm pred}}^p(F)$ be the inverse of the embedding operator ${\rm Id} : L_{T,{\rm pred}}^p(F) \hookrightarrow L_{T,{\rm ad}}^p(F)$. Then,
for $\Phi \in L_{T,{\rm ad}}^p(F)$ the process $\pi(\Phi)$ coincides with the conditional expectation of $\Phi$ given the predictable $\sigma$-algebra $\tilde{\mathcal{P}}_T$, that is
\begin{align*}
\pi (\Phi) = \tilde{\mathbb{E}}[ \Phi \,|\, \tilde{\mathcal{P}}_T ],
\end{align*}
because $\pi(\Phi) = \Phi$ almost everywhere with respect to $\mu$.
\end{remark}

If the process $\Phi$ is c\`{a}dl\`{a}g, then $\pi(\Phi)$ is easy to determine:

\begin{proposition}\label{prop-cadlag}
Suppose there is a nonnegative, measurable function $f : \tilde{\Omega} \times [0,T] \rightarrow \mathbb{R}$ such that for each $\tilde{\omega} \in \tilde{\Omega}$ we have $K(\tilde{\omega},dt) = f(\tilde{\omega},t)dt$, and suppose that $\Phi \in \mathcal{L}_{T,{\rm ad}}^p(F)$ is c\`adl\`ag. Then we have $\pi(\Phi) = \Phi_-$.
\end{proposition}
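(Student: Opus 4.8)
The plan is to invoke the characterization established in the preceding remark: $\pi(\Phi)$ is the unique element of $L_{T,\mathrm{pred}}^p(F)$ that coincides with $\Phi$ almost everywhere with respect to $\mu$. It therefore suffices to show that the left-limit process $\Phi_-$, defined by $(\Phi_-)_t := \Phi_{t-}$ for $t \in (0,T]$ and $(\Phi_-)_0 := \Phi_0$, is a legitimate such representative, i.e.\ that $\Phi_- \in L_{T,\mathrm{pred}}^p(F)$ and that $\Phi_- = \Phi$ $\mu$-almost everywhere.

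First I would check predictability. Since $\Phi$ is c\`adl\`ag and adapted, the process $\Phi_-$ is left-continuous and adapted, hence predictable; in particular $\Phi_-$ is $\tilde{\mathcal{P}}_T$-measurable and jointly measurable as an $F$-valued process.

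Next comes the $\mu$-a.e.\ identification, which is where the absolute continuity hypothesis enters. For each fixed $\tilde{\omega} \in \tilde{\Omega}$ the path $t \mapsto \Phi_t(\tilde{\omega})$ is c\`adl\`ag and thus has at most countably many discontinuities, so the section
\begin{align*}
D(\tilde{\omega}) := \{ t \in [0,T] : \Phi_{t-}(\tilde{\omega}) \neq \Phi_t(\tilde{\omega}) \}
\end{align*}
is countable and therefore a $\lambda$-null set. By the assumption $K(\tilde{\omega},dt) = f(\tilde{\omega},t)\,dt$, the set $D(\tilde{\omega})$ is then also $K(\tilde{\omega},\cdot)$-null. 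Integrating the indicator of the jump set against the kernel representation of $\mu$ and using Fubini's theorem yields
\begin{align*}
\mu(\{ \Phi_- \neq \Phi \}) = \int_{\tilde{\Omega}} K(\tilde{\omega}, D(\tilde{\omega})) \, \tilde{\mathbb{P}}(d\tilde{\omega}) = 0,
\end{align*}
so that $\Phi_- = \Phi$ $\mu$-almost everywhere. In particular $\int \| \Phi_- \|^p \, d\mu = \int \| \Phi \|^p \, d\mu < \infty$, whence $\Phi_- \in L_{T,\mathrm{pred}}^p(F)$.

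Combining both steps, $\Phi_-$ is a predictable, $p$-integrable representative of the $\mu$-equivalence class of $\Phi$, and by the uniqueness of such a representative it must equal $\pi(\Phi)$, giving $\pi(\Phi) = \Phi_-$. The only genuinely essential ingredient is the absolute continuity of the kernel $K$, which is exactly what upgrades the (merely $\lambda$-null) jump set of a c\`adl\`ag path to a $\mu$-null set; the measurability bookkeeping for $\Phi_-$ and the joint measurability of the jump set are routine and constitute the only minor technical points.
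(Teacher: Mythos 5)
Your proof is correct and follows essentially the same route as the paper's: both rest on the predictability of the left-continuous adapted process $\Phi_-$, the countability of the jump set of each c\`adl\`ag path, and the absolute continuity of $K(\tilde{\omega},\cdot)$ annihilating that countable set, so that $\Phi = \Phi_-$ $\mu$-almost everywhere. The only cosmetic difference is that you phrase the conclusion via $\mu(\{\Phi \neq \Phi_-\}) = 0$ while the paper integrates $\|\Phi - \Phi_-\|^p$ directly; these are equivalent.
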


\begin{proof}
First, we note that the process $\Phi_-$ is predictable.
Let $\tilde{\omega} \in \tilde{\Omega}$ be arbitrary. The set $\mathcal{N}_{\tilde{\omega}} = \{ t \in [0,T] : \Delta
\Phi_t(\tilde{\omega}) \neq 0 \}$ is countable. Hence, by the continuity of the measure $A \mapsto K(\tilde{\omega},A)$ we have
\begin{align*}
K(\tilde{\omega},\mathcal{N}_{\tilde{\omega}}) = 0 \quad \text{for all $\tilde{\omega} \in \tilde{\Omega}$.}
\end{align*}
Therefore, we obtain
\begin{align*}
&\iint_{\tilde{\Omega} \times [0,T]} \| \Phi_- \|^p d\mu = \int_{\tilde{\Omega}} \int_0^T \| \Phi_{t-}(\tilde{\omega}) \|^p K(\tilde{\omega},dt) \tilde{\mathbb{P}}(d\tilde{\omega}) 
\\ &= \int_{\tilde{\Omega}} \int_0^T \| \Phi_{t}(\tilde{\omega}) \|^p K(\tilde{\omega},dt) \tilde{\mathbb{P}}(d\tilde{\omega}) = \iint_{\tilde{\Omega} \times [0,T]} \| \Phi \|^p d\mu < \infty,
\end{align*}
because $\Phi \in \mathcal{L}_{T,{\rm ad}}^p(F)$, showing that $\Phi_- \in \mathcal{L}_{T,{\rm pred}}^p(F)$. Moreover, we get
\begin{align*}
&\iint_{\tilde{\Omega} \times [0,T]} \| \Phi - \Phi_- \|^p d\mu =
\int_{\tilde{\Omega}} \int_0^T \| \Phi_t(\tilde{\omega}) - \Phi_{t-}(\tilde{\omega}) \|^p K(\tilde{\omega},dt) \tilde{\mathbb{P}}(d\tilde{\omega}) 
\\ &=\int_{\tilde{\Omega}} \int_0^T \| \Delta \Phi_t(\tilde{\omega}) \|^p K(\tilde{\omega},dt) \tilde{\mathbb{P}}(d\tilde{\omega}) = 0,
\end{align*}
proving that $\Phi = \Phi_-$ almost everywhere with respect to $\mu$.
\end{proof}

\begin{remark}
Let $F := \mathbb{R}$, let
\begin{align*}
(\tilde{\Omega},\tilde{\mathcal{F}},(\tilde{\mathcal{F}}_t)_{t \geq 0},\tilde{\mathbb{P}}) := (\Omega,\mathcal{F},(\mathcal{F}_t)_{t \geq 0},\mathbb{P})
\end{align*}
be a filtered probability space and let $\mu$ be the product measure $\mu = \mathbb{P} \otimes K$, where $K$ is an absolutely continuous, finite measure on $([0, T], \mathcal{B}[0, T])$. Suppose that $\Phi \in \mathcal{L}_{T, \rm ad}^p(\mathbb{R})$ is a bounded process. According to \cite[Thm. VI.43]{Dellacherie}, there exist (up to an evanescent set) unique processes ${^o}\Phi$ and ${^p}\Phi$ such that ${^o}\Phi$ is optional (and hence progressively measurable), ${^p}\Phi$ is predictable and we have
\begin{align*}
\mathbb{E}[ X_{\tau} \mathbbm{1}_{ \{ \tau < \infty \} } \,|\, \mathcal{F}_{\tau} ] &= {^o} \Phi_{\tau} \mathbbm{1}_{\{ \tau < \infty \}} \quad \text{almost surely for every stopping time $\tau$,}
\\ \mathbb{E}[ X_{\tau} \mathbbm{1}_{ \{ \tau < \infty \} } \,|\, \mathcal{F}_{\tau -} ] &= {^p} \Phi_{\tau} \mathbbm{1}_{\{ \tau < \infty \}} \quad \text{almost surely for every predictable time $\tau$.}
\end{align*}
They are called the optional and the predictable projection of $\Phi$. By \cite[Remark VI.44.g]{Dellacherie} the optional projection ${^o} \Phi$ is a modification of $\Phi$, and hence
\begin{align*}
\mu(\Phi \neq {^o}\Phi) = (\mathbb{P} \otimes K)(\Phi \neq {^o}\Phi) = \int_0^T \mathbb{P}(\Phi_t \neq {^o}\Phi_t) K(dt) = 0.
\end{align*}
Moreover, by \cite[Thm. VI.46]{Dellacherie} we have
\begin{align*}
\{ {^o} \Phi \neq {^p} \Phi \} = \bigcup_{n \in \mathbb{N}} [\![ \tau_n ]\!],
\end{align*}
where $(\tau_n)_{n \in \mathbb{N}}$ is a sequence of stopping times and $[\![ \tau_n ]\!]$ denotes the graph
\begin{align*}
[\![ \tau_n ]\!] = \{ (\omega,\tau_n(\omega)) : \omega \in \Omega \}.
\end{align*}
Consequently, by the continuity of the measure $K$ we obtain
\begin{align*}
\mu({^o}\Phi \neq {^p}\Phi) = (\mathbb{P} \otimes K)({^o}\Phi \neq {^p}\Phi) \leq \sum_{n \in \mathbb{N}} \mathbb{E} [ K( \{ \tau_n \} ) ] = 0,
\end{align*}
showing that the inverse of the embedding operator is given by $\pi(\Phi) = {^p} \Phi$.
\end{remark}

Theorem \ref{thm-cong} provides a straightforward extension of the It\^{o} integral. Usually, one defines the It\^{o} integral as a continuous linear operator
\begin{align}\label{I-operator}
\mathcal{I} : L_{T,{\rm pred}}^2(F) \rightarrow M_T^2(G),
\end{align}
where $G$ is another separable Banach space and $M_T^2(G)$ denotes the Banach space of all $G$-valued square-integrable martingales $M = (M_t)_{t \in [0,T]}$. In fact, if $F$ and $G$ are Hilbert spaces, then the integral operator (\ref{I-operator}) is even an isometry. Using that $L_{T,{\rm ad}}^2(F) \cong L_{T,{\rm pred}}^2(F)$ according to Theorem \ref{thm-cong}, we can define the It\^{o} integral as
continuous linear operator
\begin{align}\label{extended}
\mathcal{I} : L_{T,{\rm ad}}^2(F) \rightarrow M_T^2(G).
\end{align} 
By localization, we can further extend the It\^{o} integral to all $F$-valued progressively measurable processes $\Phi = (\Phi_t)_{t \geq 0}$ such that $\tilde{\mathbb{P}}$--almost surely
\begin{align*}
\int_0^T \| \Phi_s \|^2 K(ds) < \infty \quad \text{for all $T > 0$,}
\end{align*}
and then the integral process $\mathcal{I}(\Phi)$ is a local martingale. We shall outline some concrete situations in the upcoming section.

\section{The It\^{o} integral for adapted, measurable processes}\label{sec-integral}

We shall now outline the extension of the It\^{o} integral in various situations. In what follows, $(\Omega,\mathcal{F},(\mathcal{F}_t)_{t \geq 0},\mathbb{P})$ denotes a filtered probability space satisfying the usual conditions.

\subsection{The It\^{o} integral with respect to martingales}

Let $M$ be a real-valued, square-integrable martingale.
Recall that the predictable quadratic variation $\langle M,M \rangle$ is the unique adapted, non-decreasing process such that $M^2 - \langle M,M \rangle$ is a martingale, see \cite[Thm. I.4.2]{Jacod-Shiryaev}. We assume that the quadratic variation $\langle M,M \rangle$ is absolutely continuous, which is in particular the case for L\'{e}vy processes. We set
\begin{align*}
(\tilde{\Omega},\tilde{\mathcal{F}},(\tilde{\mathcal{F}}_t)_{t \geq 0},\tilde{\mathbb{P}}) := (\Omega,\mathcal{F},(\mathcal{F}_t)_{t \geq 0},\mathbb{P}) \quad \text{and} \quad \mu := \mathbb{P} \otimes \langle M,M \rangle
\end{align*}
and let $F=G$ be a separable Hilbert space. Proceeding as in \cite[Sec. I.4d]{Jacod-Shiryaev}, we define the It\^{o} integral $\Phi \cdot M$ as the isometry (\ref{I-operator}). Using Theorem \ref{thm-cong}, we extend it to the isometry (\ref{extended}). The resulting It\^{o} integral coincides with the stochastic integral constructed in \cite{Liptser-Shiryaev}.

We remark that this construction is still possible if $F,G$ are separable Banach spaces of M-type 2 (see, e.g., \cite[Chap. 6]{Pisier}). Moreover, the martingale $M$ may even be infinite dimensional. Then, $\Phi$ has values in the space $L(F,G)$ of bounded linear operators from $F$ to $G$, see \cite{Pratelli}.

As pointed out in \cite[Sec. 18.4.1]{Liptser-Shiryaev-2}, for non-predictable integrands the just defined integral may not coincide with the pathwise Lebesgue-Stieltjes integral, provided the latter exists. For example, let $N$ be a standard Poisson process and let $M$ be the martingale $M_t = N_t - t$. By Proposition \ref{prop-cadlag}, the inverse of $N$ under the embedding operator is given by $\pi(N) = N_-$, and therefore we obtain the It\^{o} integral
\begin{align*}
(N \cdot M)_t = (N_- \cdot M)_t = \frac{1}{2} (N_t^2 - N_t) - \int_0^t N_{s-} ds,
\end{align*}
because the It\^{o} integral of the predictable process $N_-$ coincides with the pathwise Lebesgue-Stieltjes integral. On the other hand, we obtain the pathwise Lebesgue-Stieltjes integral
\begin{align*}
\int_0^t N_s dM_s =  \frac{1}{2} (N_t^2 + N_t) - \int_0^t N_s ds = N_t + (N \cdot M)_t,
\end{align*}
which cannot be a martingale, because $\mathbb{E}[N_t] = t$.

Thus, our extension of the It\^{o} integral may not coincide with the pathwise Lebesgue-Stieltjes integral, but it preserves the martingale property of the integral process, which makes it interesting for applications.

\subsection{The It\^{o} integral with respect to infinite dimensional Wiener processes}

Let $H,U$ be separable Hilbert spaces and let $Q \in L(U)$ be a
compact, self-adjoint, strictly positive linear operator. Then, there
exist an orthonormal basis $\{ e_j \}$ of $U$ and a bounded sequence
$(\lambda_j)$ of strictly positive real numbers such that
\begin{align*}
Qu = \sum_j \lambda_j \langle u,e_j \rangle e_j, \quad u \in U
\end{align*}
namely, the $\lambda_j$ are the eigenvalues of $Q$, and each $e_j$
is an eigenvector corresponding to $\lambda_j$.
The space $U_0 := Q^{1/2}(U)$, equipped with inner product
$\langle u,v \rangle_{U_0} := \langle Q^{-1/2} u,
Q^{-1/2} v \rangle_U$, is another separable Hilbert space
and $\{ \sqrt{\lambda_j} e_j \}$ is an orthonormal basis.

Let $W$ be a $Q$-Wiener process \cite[p. 86,87]{Da_Prato} such that $Q$ is a trace class operator, that is ${\rm tr}(Q) = \sum_j \lambda_j < \infty$.
We denote by $L_2^0 := L_2(U_0,H)$ the space of Hilbert-Schmidt
operators from $U_0$ into $H$, which, endowed with the
Hilbert-Schmidt norm
\begin{align*}
\| \Phi \|_{L_2^0} := \sqrt{\sum_j \lambda_j \| \Phi e_j \|^2},
\quad \Phi \in L_2^0
\end{align*}
itself is a separable Hilbert space. We set
\begin{align*}
(\tilde{\Omega},\tilde{\mathcal{F}},(\tilde{\mathcal{F}}_t)_{t \geq 0},\tilde{\mathbb{P}}) := (\Omega,\mathcal{F},(\mathcal{F}_t)_{t \geq 0},\mathbb{P}), \quad \mu := \mathbb{P} \otimes \lambda, \quad F := L_2^0, \quad G := H,
\end{align*}
where $\lambda$ denotes the Lebesgue measure.
Following \cite[Chap. 4.2]{Da_Prato}, we define the It\^{o} integral $\Phi \cdot W$ as the isometry (\ref{I-operator}). Using Theorem \ref{thm-cong}, we extend it to the isometry (\ref{extended}). The resulting It\^{o} integral coincides with the stochastic integral constructed in \cite{Atma-book}.

Analogously, we define the stochastic integral with respect to an infinite dimensional
L\'{e}vy process (see \cite{P-Z-book}) for adapted, measurable integrands.

\subsection{The It\^{o} integral with respect to compensated Poisson random measures}\label{sub-Prm}

Let $(E,\mathcal{E})$ be a measurable space which we assume to be a
\textit{Blackwell space} (see, e.g., \cite{Getoor}). We remark
that every Polish space with its Borel $\sigma$-field is a Blackwell
space. Let $N$ be a homogeneous Poisson random measure on $\mathbb{R}_+
\times E$, see \cite[Def. II.1.20]{Jacod-Shiryaev}. Then its compensator is of
the form $dt \otimes \beta(dx)$, where $\beta$ is a $\sigma$-finite measure
on $(E,\mathcal{E})$. We define the compensated Poisson random measure $q(dt,dx) := N(dt,dx) - \beta(dx) dt$ and set
\begin{align*}
(\tilde{\Omega},\tilde{\mathcal{F}},(\tilde{\mathcal{F}}_t)_{t \geq 0},\tilde{\mathbb{P}}) := (\Omega \times E,\mathcal{F} \times \mathcal{E},(\mathcal{F}_t \times \mathcal{E})_{t \geq 0},\mathbb{P} \otimes \beta),
\quad \mu := \tilde{\mathbb{P}} \otimes \lambda,
\end{align*}
where $\lambda$ denotes the Lebesgue measure,
and let $F=G$ be separable Hilbert spaces. Proceeding as in \cite[Sec. 4]{Applebaum}, we define the It\^{o} integral $\Phi \cdot q$ as the isometry (\ref{I-operator}). Using Theorem \ref{thm-cong}, we extend it to the isometry (\ref{extended}).

We remark that this construction is still possible if $F$ is a separable Banach space of M-type 2 (see, e.g., \cite[Chap. 6]{Pisier}). 
The resulting It\^{o} integral coincides with the stochastic integral constructed in \cite{Barbara-Integration}.

Moreover, such a construction is also possible on general separable Banach spaces, provided that the inequality 
\begin{align*}
\mathbb{E} \Bigg[ \bigg\| \int_0^t \int_E \Phi(s,x) q(ds,dx) \bigg\|^2 \Bigg] \leq K_{\beta} \mathbb{E} \bigg[ \int_0^T \int_E \| \Phi(s,x) \|^2 \beta(dx) ds \bigg]
\end{align*}
holds for all simple processes $\Phi$, with a constant $K_{\beta} > 0$ only depending on $\beta$, see \cite{Mandrekar-Ruediger-2009}.

\subsection{Stochastic partial differential equations}

Finally, we mention that we can use the extension of the stochastic integral provided in this paper in order to solve stochastic differential equations or even stochastic partial differential equations
\begin{align}\label{SPDE}
\left\{
\begin{array}{rcl}
dr_t & = & (A r_t + \alpha(t,r_t)) dt + \sigma(t,r_t) dW_t + \int_E \gamma(t,x,r_t) q(dt,dx) \medskip
\\ r_0 & = & h_0
\end{array}
\right.
\end{align}
on Hilbert spaces driven by an infinite dimensional Wiener process and a compensated Poisson random measure. Such equations have been studied, e.g., in \cite{Ruediger-mild, SPDE, Marinelli-Prevot-Roeckner}. In equation (\ref{SPDE}), the operator $A : \mathcal{D}(A) \subset H \rightarrow H$ denotes the generator of a strongly continuous semigroup.

Using our extension of the It\^{o} integral, under appropriate Lipschitz conditions on the vector fields we can prove the existence if a unique mild solution for (\ref{SPDE}) by performing a fixed point argument on appropriate spaces of progressively measurable processes.

\end{document}